\documentclass[11pt,reqno]{amsart}

\usepackage{amsfonts}
\usepackage{amsmath}
\usepackage{amssymb, esint}
\usepackage{amsthm,color}
\usepackage{enumerate}
\usepackage{mathtools}
\usepackage{enumitem}
\usepackage{url}
\usepackage[hidelinks, bookmarksdepth=3]{hyperref}
\usepackage{xcolor}
\usepackage{cancel}
\usepackage{ulem}
\usepackage{caption,subcaption}
\usepackage{tikz}
\usepackage{tikz-3dplot}

\usepackage{longtable}

\usepackage{soul}

\tdplotsetmaincoords{70}{110}
\usepackage{geometry}
\newcommand{\R}{{\mathbb R}}

\def\0{{\mathbf 0}}

\newcounter{textblock}

\usepackage{orcidlink}

\theoremstyle{plain}
\newtheorem{thm}{Theorem}[section]

\newtheorem{cor}[thm]{Corollary}
\newtheorem{lem}[thm]{Lemma}

\newcommand{\thistheoremnames}{}
\newtheorem*{genericthms}{\thistheoremnames}
\newenvironment{para*}[1]
  {\renewcommand{\thistheoremnames}{#1}%
   \begin{genericthms}}
  {\end{genericthms}}

\theoremstyle{remark}

\newtheorem*{claim*}{Claim}

\newtheorem{rem}[thm]{Remark}

\numberwithin{equation}{section}

\subjclass[2020]{Primary 35B65}

\title[Partial regularity of the  gradient for subsolutions]{Partial regularity of the  gradient for subsolutions}

\author{A. Hakobyan}\address[Aram Hakobyan]{Yerevan, Armenia.}
\email{hakobyan.aram@gmail.com}

\author{M. Poghosyan \, \orcidlink{0009-0002-0066-5494}} 
\address[Michael Poghosyan] {Department of Mathematics,  Yerevan State University, Armenia.}
\email{michael@ysu.am}

\author[Shahgholian]{Henrik Shahgholian\,\orcidlink{0000-0002-1316-7913}}
\address{Department of Mathematics, KTH Royal Institute of Technology, SE-10044 Stockholm, Sweden}
\email{henriksh@kth.se
}

\begin{document}

\begin{abstract}
We prove that the gradient of any bounded subharmonic function is upper semi-continuous, provided that its super-level sets can be touched from the exterior by  uniform $C^{1,\text{Dini}}$ domains at every point.
This idea extends to a class of  general operators, as well as  to  the boundary behaviour of the gradient of  solutions of the Dirichlet problem  in a domain whose boundary satisfy this geometric condition.

\end{abstract}

\maketitle

\section{Rationale}

In the unit ball $B_1 \subset \R^n$ ($n\geq 2$), let $u$ be a bounded subharmonic  function,  
\begin{equation}\label{eq:sub}
\Delta u \geq 0 ,
\end{equation}
in the weak sense, meaning $u \in W^{1,2} (B_1)$
$$
\int \nabla u \cdot \nabla \varphi \leq 0, 
\qquad \forall  \quad
0 \leq \varphi \in W_0^{1,2}(B_1).
$$

The question we raise is: 

    \begin{center}
    {\it Find minimal geometric conditions on the level sets of  $u$ in $B_{1/2}$, \\ such that $\nabla u$ is upper semi-continuous.  }
\end{center}

To  our  knowledge,  the question of the upper semi-continuity of the gradient at the boundary appeared first\footnote{This is probably  the only place where such a question is studied! However several recent works have considered the question of differentiability of $u$ up to the boundary for Dirichlet problem.}  in one of the  earlier articles by the 3rd author \cite{Henrot-Sh-2000}, which studied the existence of solutions to the Bernoulli free boundary problem in the convex setting. In that work, proving the  upper semi-continuity of the gradient of the capacitor potential (in convex rings) was a technical ingredient in the argument.

However, we later observed that this property can be viewed in a broader framework, prompting us to examine the boundary behaviour of the gradient of solutions to PDEs under geometric conditions on the boundary beyond convexity. As we pursued a more systematic understanding of the problem, it became clear that this idea  applies in a much greater generality than initially expected, which ultimately motivated the present paper.

Let us first mention two potential objections that might be raised to such a seemingly ad hoc question, especially when posed at this level of generality:

\medskip 

(i) Why do we consider upper rather than lower semi-continuity?

(ii) Why not simply redefine the gradient as its upper semi-continuous envelope?

\medskip

Indeed, if   the level surface $\partial\{u > u(x^0)\}$ is $C^1$ in a neighborhood of $x^0$, then (formally) one has\footnote{This can be seen by using a weak or distributional meaning of the Laplacian 
$$
0 \geq  \int \nabla u \cdot \nabla \varphi  = 
\int_{\{u  > u(x^0)\} \cup \{u  < u(x^0)\} } \nabla u \cdot \nabla \varphi =
- \int_{ \{ u = u(x^0)\}} \partial^+_\nu u \varphi + \int_{\{ u = u(x^0)\}} \partial^-_\nu u \varphi 
$$ 
for $\varphi \in W_0^{1,2} (B_r(x^0) )$, and where $\partial^\pm_\nu$ denotes the normal derivatives on $\{ u= u(x^0)\}$ from $\{\pm u > u(x^0)\}$, and  $\nu = \nabla u /|\nabla u|$ on $\{ u = u(x^0)\}$.  Since $\varphi$ is arbitrary, we arrive (formally)  at \eqref{eq:Lap-pos}.
}
\begin{equation}\label{eq:Lap-pos}
    0 \leq \Delta u (x^0) = \partial^+_\nu u(x^0) - \partial^-_\nu u(x^0),
\end{equation}
where $\pm$ denotes the derivative taken from the regions ${\pm (u(x) - u(x^0)) > 0}$. This shows that the normal derivative seen from the super-level sets is greater than that of  the sublevel sets. In particular, if $u$ is nonnegative and vanishes outside a region $D$, then using the lower semi-continuous envelope fails to capture the correct behavior of $u$ relative to its support.

The justification against  the second objection is that we aim to capture the behavior of the gradient at least in a non-tangential sense. For instance, in the scenario just described with $D = \{u > 0\}$ and $u \geq 0$ elsewhere, the boundary of $D$ may be smooth at $x^0$ (e.g., touched by spheres from both sides) but still exhibit a zig-zag geometry. At such inward corners of the boundary, the gradient may blow up, so that the upper semi-continuity of the gradient fails if the boundary gradient is defined through non-tangential limits.
When $D$ is convex, the situation is more transparent, and one expects the gradient to behave better at the boundary. In fact, in \cite{LW2006}, the authors  construct an example where the gradient, taken non-tangentially, is nonzero at $x^0$ but is not continuous, since  tangentially, along outward corners, the gradient vanishes. Nevertheless, they prove that the solution to their PDE is differentiable. 

Further results in the literature concern the differentiability of  the boundary gradient of solutions to PDEs, under one-sided smoothness assumptions.\footnote{It is obvious that differentiability, unless uniform in a uniform neighborhood, does not give continuity of the gradient.}
We refer to several recent works in this direction\cite{HLW2014, HLZ2016, HLW2020}.

Before discussing our main results, we also want to draw the reader's attention to an example, provided by 
H.W. Alt and L.A. Caffarelli \cite[Example 2.7]{Alt-Caff}, in a complete different context,  concerning a  Lipschitz harmonic function  in a cone, with the boundary at the  origin being non-smooth and such that $|\nabla u|$ has different values, depending on which direction one approaches the origin. Their example is the following function 
\[
u(x) := r \max\left( \frac{f(\theta)}{f'(\theta_0)}, \; 0 \right),
\]
where $\theta$ is a  given  polar  angle  and 
\[
x = r(\cos \phi \sin \theta, \; \sin \phi \sin \theta, \; \cos \theta), \qquad 
f(\theta) = 2 + \cos \theta \, \log \left( \frac{1 - \cos \theta}{1 + \cos \theta} \right)
\]
is a solution of
\[
(\sin \theta \, f')' + 2 \sin \theta \, f = 0, 
\quad f'\!\left(\tfrac{\pi}{2}\right) = 0,
\]
and where $\theta_0 \approx 33.534^\circ$ is the unique zero of $f$ between $0$ and $\tfrac{\pi}{2}$. 
The function $u$ is harmonic in $\{u>0\}$ and $|\nabla u | = 1$ on $\partial\{u>0\}\setminus\{0\}$. 
The support of $u$ has a singularity at the origin, and 
\[
|\nabla u| \leq 1 - \epsilon 
\quad \text{for} \quad 
\theta_0 + \epsilon \leq \theta \leq \pi - \theta_0 - \epsilon,
\]

This example and the above discussions  indicate that one needs strong enough assumptions from one side of the level sets to obtain any reasonable result in our problem. One such condition seems to be the exterior touch with a reasonably smooth graph at a point to obtain further improved properties for the gradient. Indeed, this assumption allows us to use a barrier argument and obtain Lipschitz regularity of our subharmonic function in \eqref{eq:sub}. See Lemma \ref{lem:barrier} in the Appendix.

Since Lipschitz functions are almost everywhere smooth, 
our next task now  is to provide a definition of the gradient at   points where $u$ is not $C^1$, and in such a way that it coincides with the gradient at its  $C^1$ points.
Indeed, consider the  weighted  integral  mean of the gradient on the super-level sets of $u$, defined as 
\begin{equation}\label{eq:monot0}
I(r, y, u) := \frac{1}{r^2} \int_{B_r(y)} \frac{|\nabla (u(x) -u(y))_+|^2}{|x - y|^{n-2}} \ dx,
\end{equation}
where $(u(x) -u(y))_+ = \max (u(x) -u(y), 0)$. We will  prove that under an extra condition on the level sets of $u$, this function is  non-decreasing in $r$, and therefore it has a limit as $r$ tends to zero. For $C^1$ points this converges to $c_y |\nabla u(y)|^2$, for some $c_y >0$. We will also show  that $c_y$ is independent of $y$, and hence it is constant.
From this we shall then deduce the upper semi-continuity of the gradient.

Our argument is based on what is known in the literature as the  ACF monotonicity formula \cite{CJK}, which is well-developed for more general scenarios; 
see \eqref{eq:property}--\eqref{eq:monot1}.

In particular, this allows us to generalize our results for other operators (including time dependent operators) where such a monotonicity formula is available. 
As we also mentioned earlier, the approach here  is applicable to the Dirichlet problem, when the boundary of the domain possesses the aforementioned property.

\section{Main Results}

For our bounded subharmonic function $u$ in \eqref{eq:sub},
suppose that  for every point $y \in B_{2/3}$, the super-level set
\begin{equation}\label{eq:Dy}
    D_{y}^+ := \{ x \in B_1 : u(x) > u(y) \}
\end{equation}
can be touched from the exterior  at $y$ by a connected  domain  $K_y \subset B_2$ with $K_y \setminus \overline B_{3/2} \neq \emptyset $ and   for some  $r_0 > 0$ (fixed)
with  
\begin{equation}\label{eq:norm}
    \partial K_y \cap B_{r_0} (y) \in C^{1,\text{Dini}}(B_{r_0}(y)) ,
    \end{equation}
with  the norm being uniform in $y$.

  By  a barrier argument, this geometric condition implies that $u$ is uniformly Lipschitz; see  Lemma \ref{lem:barrier} in the Appendix.

To invoke our main analytical tool, the ACF-monotonicity formula, for a fixed $y\in B_{2/3}$ we consider  a pair of non-negative continuous  functions $h_\pm$ satisfying 
\begin{equation}\label{eq:property}
h_+ h_- = 0, \qquad h_+(y) = h_-(y) = 0, \qquad \Delta h_\pm \geq 0 \quad \text{in } B_{r_1 } (y) , \qquad r_1 < 1 -|y|.
\end{equation}

We now define
\begin{equation}\label{eq:monot2}
I(r, y, h_\pm) := \frac{1}{r^2} \int_{B_r(y)} \frac{|\nabla h_\pm(x)|^2}{|x - y|^{n-2}} \ dx,
\end{equation}
where the  integrals are meaningful and well defined, by  \cite[Remark 1.5]{CJK}.
Next we define the monotone increasing function (in $r$)
\begin{equation}\label{eq:monot1}
\mathcal{I}(r, y, h_+, h_-) := I(r, y, h_+) I(r, y, h_-) \quad \nearrow \quad \hbox{in } r,
\end{equation}
where the monotonicity follows from  \cite{CJK}.

With this monotonicity formula at hand and the imposed geometric property on  super-level sets, 
we can define the gradient of our function $u$ from \eqref{eq:sub} at every point in $B_{1/2}$, in a unique way.
 Indeed, let's fix $y\in B_{1/2}$ and consider, by the above geometric assumption, a domain 
  $K_y $ with $C^{1,\text{Dini}}$ boundary that touches $\partial D_y^+$ at $y$.

Let now $G_y$ denote the harmonic function in $K_y$ with zero boundary data on $\partial K_y \cap B_{1}$, with $G_y = 1$ on $\partial K_y \setminus B_{5/4}$, and defined arbitrarily on the rest of the boundary in such a way that the boundary value is smooth, and non-negative.

Obviously, defined in this way,  $G_{y}$ is uniformly $C^1$ in $\overline{K_y} \cap B_{r_0/2} (y)$, so its gradient is well defined up to the boundary $\partial K_y \cap B_{r_0/2} (y)$. Therefore, the limit  
\[
\lim_{r \to 0} I(r, y, G_{y}) = c_0|\nabla G_y(y)| >0 
\]  
is well defined, and strictly positive by Hopf's boundary point Lemma, see \cite{Widman}, and \cite{Ap-Naz} for divergence type operators, which will be needed later for our results in a general setting.
Here  $c_0$ is defined as 
\begin{equation}\label{eq:c0}
c_0 :=
 \frac12 \int_{B_1 (0)   } \frac{dx }{|x |^{n-2}} .
\end{equation}
 
Next, we define $u_y := \max(u - u(y), 0)$, which put in the   ACF-monotonicity formula along with $G_y$, 
gives us 
\begin{equation}\label{eq:I}
\lim_{r \to 0} I(r, y, u_y) = \lim_{r \to 0} \frac{\mathcal{I}(r, y, u_y, G_y )}{I(r, y, G_y )},
\end{equation}
which also exists.
Moreover, this limit is independent of the particular choice of $K_y$, or the harmonic function $G_y$.
Thus, the limit is well defined.\footnote{Indeed, one can verify that $I(r, y, u) = \frac{\mathcal{I}(r, y, u_y, h)}{I(r, y, h)}$ for any admissible choice of $h$, as long as the integrals are well defined.}

Now,  we define $|\nabla u (y)| $  
as 
\begin{equation}\label{eq:gradient}
    |\nabla u(y)|^2 := \frac{1}{c_0} \lim_{r \to 0} I(r, y, u_y), \qquad \forall\, y \in   B_{1/2},
\end{equation}
where $c_0$ is defined in \eqref{eq:c0}. We shall use this  to also define $\nabla u (y) $ below.

It should be noted that by this definition, 
at points where $\nabla u $ is  continuous, we recover the gradient in its pointwise meaning.

From the above discussion, we conclude  that  for any $0 < r_1 < 1/2 $  and any $y \in  B_{1/2}$ 
$$c_0^2|\nabla u (y) |^2|\nabla G_y (y) |^2 = 
\lim_{r\to 0}  \mathcal I(r,y,u_y,G_y) \leq   \mathcal I(r_1,y,u_y,G_y) .
$$
Fix a small parameter $0 < \epsilon \ll 1$, and let $y^0 \in B_{1/2}$ be arbitrary.   Consider any point $y$ such that 
\begin{equation}\label{eq:distance}
 |y - y^0| < \epsilon^2, \qquad 
\max_{z\in \partial D_{y}^+}{\hbox{dist}(z, \partial D_{y^0}^+)} < \epsilon^2
\end{equation}
which,  eventually, is to  approach $y^0$. Our goal is to prove
 the upper semi-continuity of $|\nabla u|$ at $y^0$, i.e., 
 $$\limsup_{y \to y^0} |\nabla u(y)| \leq  |\nabla u(y^0)|.$$
 
  Obviously,  we may only consider those $y$ in the vicinity of $y^0$ in the sense of \eqref{eq:distance}
and  such that  
$$      |\nabla u(y)| >  0 , $$
 which, along with    the monotonicity of ${\mathcal I} (r, y, u_y, G_y)$
and that $|\nabla G_y(y)| > 0$, implies
\begin{equation}\label{eq:estimate0}
    c_0^2|\nabla u (y) |^2|\nabla G_y (y) |^2
\leq   \mathcal I(r_\epsilon,y,u_y,G_y) = I(r_\epsilon,y,u_y) I(r_\epsilon,y,G_y)  ,
\end{equation}
where we take $r_\epsilon = \epsilon + |y - y^0| < 2\epsilon $.

We shall now compute  the right hand side in the above. 
Indeed, it is straightforward  (see Appendix, Lemma \ref{lem:estimates}) that 
\begin{equation}\label{eq:estimate}
    I(r_\epsilon,y,u_y) =  I(r_\epsilon,y^0,u_y) + o_\epsilon (1).
\end{equation}
This in particular implies
\begin{equation}\label{eq:estimate2}
c_0^2|\nabla u (y) |^2|\nabla G_y (y) |^2 
\leq  I(r_\epsilon,y^0,u_y) I(r_\epsilon,y,G_y) +  o_\epsilon (1) .
\end{equation}
If \( |\nabla u(y^0)| = 0 \), (as the integral mean defined in \eqref{eq:gradient})
then the first term on the  right-hand side of \eqref{eq:estimate2} is small, and we obtain the estimate 
\[
c_0^2 |\nabla u(y)|^2 |\nabla G_y(y)|^2 = o_\epsilon(1).
\]
Now, due to the geometric assumption that $K_y$ has a uniform $C^{1,Dini}$ norm in a uniform neighborhood, independent of $y$, see \eqref{eq:norm}, 
we must have 
\( |\nabla G_y(y)| > c_1 > 0 \), and  it follows that
$|\nabla u(y)|^2 = o_\epsilon(1)$,
which establishes the upper semi-continuity of \( |\nabla u| \) at \( y^0 \), for the case \( |\nabla u(y^0)| = 0 \).

Next, consider the case \( |\nabla u(y^0)| > 0 \).
Using the definition of $u_y$ along with  \eqref{eq:distance} and  $r_\epsilon = \epsilon + |y-y^0| \leq  \epsilon + \epsilon^2$, we see that  
if $u(y) \leq u(y^0)$ then 
$$
I(r_\epsilon,y^0,u_{y}) = 
 I(r_\epsilon,y^0,u_{y^0}) + \frac{1}{r_\epsilon^2 }\int_{B_{r_\epsilon} (y^0) \cap D_{y}^+  \setminus D_{y^0}^+ }  \frac{|\nabla u|^2}{|x-y^0|^{n-2}} =  I(r_\epsilon,y^0,u_{y^0}) + o_y (1) .
$$
On the other hand if if $u(y) \geq u(y^0)$ then 
$$
I(r_\epsilon,y^0,u_{y}) = 
 I(r_\epsilon,y^0,u_{y^0}) - \frac{1}{r_\epsilon^2 }\int_{B_{r_\epsilon} (y^0) \cap D_{y^0}^+  \setminus D_{y}^+ }  \frac{|\nabla u|^2}{|x-y^0|^{n-2}} =  I(r_\epsilon,y^0,u_{y^0}) + o_y (1) .
$$

 Adding up, we have 
 $$
c_0^2|\nabla u (y) |^2|\nabla G_y (y) |^2 
\leq I(r_\epsilon,y^0,u_{y^0})  I(r_\epsilon,y,G_y) + o_y(1)o_\epsilon (1) ,
$$
and therefore,
$$c_0^2|\nabla u (y) |^2 \leq  \frac{I(r_\epsilon, y^0, u_{y^0}) I(r_\epsilon, y , G_{y} ) + o_y(1)o_\epsilon (1)  }{|\nabla G_y(y)|^2}
=   \frac{I(r_\epsilon, y^0, u_{y^0}) \left(c_0|\nabla G_{y} (y) |^2\right) + 
o_\epsilon (1) (1+   o_y(1))  }{|\nabla G(y)|^2},
$$
where we have used that  $G_y$, being  uniformly $C^1$,  can be expressed as 
$$
I(r_\epsilon, y , G_{y} ) = c_0|G_y(y)|^2 + o_{\epsilon}(1).
$$
We thus have 
$$c_0^2|\nabla u (y) |^2 \leq 
c_0I(r_\epsilon, y^0, u_{y^0})  + 
\frac{o_\epsilon (1) (1+   o_y(1))  }{|\nabla G_y(y)|^2}.
$$

Letting $\epsilon$, and hence $|y-y^0|$, tend to zero, we have 
$$
\lim_{y \to y^0}|\nabla u (y) |^2 \leq 
\frac{1}{c_0} I(0_+, y^0, u_{y^0})=|\nabla u (y^0)|^2.
$$

Next, we  prove the  upper semi-continuity for each directional derivative $\partial_ j u$ for $j=1, \cdots , n$.
We assume, after rigid motion, that   the outward normal direction to  $\partial K_{y^0}$ is $e_1$; this means that $e_1$ is pointing inward $D_y^+$.
By a scaling and blowup argument one can  prove that either $|\nabla u (y^0)| =0$, or   
\begin{equation}\label{eq:asymptotic}
    u(x) = c_1(x_1 -y_1^0)^+ + o(1) \qquad \hbox{in } D^+_{y^0} , \quad \hbox{near } y^0.
    \end{equation}
 This, in turn,  implies  $|\nabla u (y^0)|= \partial_1 u(y_0) $  where the latter is also defined in the sense of limit of  integral-mean. 
See   Lemma \ref{lem:asymptot} in  Appendix.

  Now for any point $y \in D$ with $|\nabla u (y) |>0$, we have $|\partial_1  u(y) |\leq |\nabla u (y) |$, and hence we can run the above  upper semi-continuity argument for $|\nabla u (y) |$, and obtain 
$$
\lim_ {y \to y^0}|\partial_1  u(y) |^2\leq 
\lim_ {y \to y^0}|\nabla u (y) |^2 \leq |\nabla u (y^0) |^2.
$$
In case $|\nabla u (y^0) | = 0$, we are done. 
Otherwise  $|\nabla u (y^0)| = \partial_1 u (y^0)$, and  (in the integral-mean sense) $\partial_j u (y^0)= 0$  when $j\neq 1$.

The argument, with some adjustment,  can be applied to subsolutions of the  Poisson equation  
\begin{equation}\label{eq:sub2}
    \Delta u \geq -1 ,
\end{equation} 
where one needs to replace the ACF monotonicity formula \eqref{eq:monot1}
with that of  Caffarelli-Jerison-Kenig, which says
\begin{equation}\label{eq:alm-mon}
    \mathcal I (\rho ,y,h_+,h_-) \leq (1 + r)\mathcal I (r ,y,h_+,h_-) + C r^\delta ,
\end{equation}
for $0 < \rho \leq r < 1 $, some $C >0$
and $\delta >0 $, such that $u \in C^{0,\delta} (B_1)$. In our case, we have  $\delta =1$, since $u$ is Lipschitz, due to assumptions. 

Obviously \eqref{eq:alm-mon} implies that $\mathcal I (r ,y,h_+,h_-) $ converges  as $r\searrow 0$. It is also apparent that the whole argument run so far only uses the fact that this limit exists. We can thus run the argument again with this property and obtain the upper semi-continuity for the gradients of solutions to \eqref{eq:sub2}, under the extra geometric assumption.

We have thus proved the following theorem.

\begin{thm}\label{thm:subsol1}
    Let $u$ be a bounded  function satisfying  $\Delta u \geq -1$ in $B_1$,  and suppose  the exterior touching property \eqref{eq:norm} holds for all points in $B_{3/4}$.
Then $\nabla u$ is upper semi-continuous in $B_{1/2}$.
\end{thm}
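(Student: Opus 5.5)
The proof follows the same route as the subharmonic case above, with the ACF monotonicity formula replaced by the almost-monotonicity \eqref{eq:alm-mon}.

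\emph{Step 1: Lipschitz bound and the functionals.} First, I use the exterior touching property \eqref{eq:norm} on $B_{3/4}$ and the barrier of Lemma \ref{lem:barrier} to get a uniform Lipschitz bound for $u$ on $B_{2/3}$. The barrier absorbs the right-hand side $-1$ by adding a quadratic term. For $y\in B_{1/2}$ I set $u_y=(u-u(y))_+$. Since $\Delta u\ge -1$, the function $u_y+|x-y|^2/(2n)$ is subharmonic. I construct $G_y$ as before, harmonic in $K_y$. The pair $(u_y,G_y)$ then satisfies the hypotheses of the Caffarelli--Jerison--Kenig formula, so \eqref{eq:alm-mon} holds with $\delta=1$ and with constants uniform in $y$, by the uniform Lipschitz bound. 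Hence $\lim_{r\to0}\mathcal I(r,y,u_y,G_y)$ exists. Since $G_y$ is uniformly $C^1$ up to $\partial K_y$, Hopf's lemma \cite{Widman} gives $I(r,y,G_y)=c_0|\nabla G_y(y)|^2+o_r(1)$ with $|\nabla G_y(y)|\ge c_1>0$, uniformly in $y$. Therefore $\lim_{r\to 0}I(r,y,u_y)$ exists, and $|\nabla u(y)|$ is defined by \eqref{eq:gradient}.

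\emph{Step 2: upper semi-continuity of $|\nabla u|$.} Fix $y^0$ and let $y\to y^0$ subject to \eqref{eq:distance}. Exact monotonicity is replaced by
\[
c_0^2|\nabla u(y)|^2|\nabla G_y(y)|^2\le (1+r_\epsilon)\,I(r_\epsilon,y,u_y)\,I(r_\epsilon,y,G_y)+Cr_\epsilon .
\]
The extra error $Cr_\epsilon=O(\epsilon)$ is harmless. I then move the centre from $y$ to $y^0$ via Lemma \ref{lem:estimates}, and compare $u_y$ with $u_{y^0}$ on $B_{r_\epsilon}(y^0)$ in the two cases $u(y)\lessgtr u(y^0)$, as in the display above. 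Dividing by $|\nabla G_y(y)|^2\ge c_1^2$ and letting first $y\to y^0$ and then $\epsilon\to0$ gives $\limsup_{y\to y^0}|\nabla u(y)|\le|\nabla u(y^0)|$.

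\emph{Step 3: the components.} The main obstacle is passing from $|\nabla u|$ to the vector $\nabla u$. I will use Lemma \ref{lem:asymptot}: blow up $u_{y^0}$ at $y^0$ by Lipschitz rescalings. The inhomogeneity $-1$ scales away as $r\to 0$, so any limit is subharmonic and vanishes outside the half-space determined by the normal $e_1$ of $\partial K_{y^0}$. The almost-monotonicity forces the limit profile to be $c_1(x_1-y^0_1)^+$ whenever $|\nabla u(y^0)|>0$. This identifies $\nabla u(y^0)=|\nabla u(y^0)|e_1$ in the integral-mean sense. Then $|\partial_j u(y)|\le|\nabla u(y)|$ together with Step 2 gives upper semi-continuity of each $\partial_ju$ at $y^0$. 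The delicate point is keeping all the $o(1)$ errors uniform in $y$. This uniformity rests on the uniform $C^{1,\text{Dini}}$ norm in \eqref{eq:norm}, which controls $G_y$, $c_1$ and the Lipschitz constant simultaneously.
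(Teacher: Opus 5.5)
Your Steps 1 and 2 follow the paper's argument. You are more careful than the paper in one respect: you carry the factor $(1+r_\epsilon)$ and the error $Cr_\epsilon$ from \eqref{eq:alm-mon} through the key inequality, uniformly in $y$. The paper only remarks that existence of $\lim_{r\to0}\mathcal I$ is what is used, but the bound of that limit by the value at $r_\epsilon$ is what is actually needed.

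One small correction in Step 2: do not restrict to $y$ satisfying the second condition in \eqref{eq:distance}. That closeness of level sets is not automatic as $y\to y^0$, so imposing it only yields upper semi-continuity along special sequences. It is also unnecessary. For fixed $\epsilon$,
\[
|\nabla u_y|^2-|\nabla u_{y^0}|^2=|\nabla u|^2\bigl(\chi_{\{u>u(y)\}}-\chi_{\{u>u(y^0)\}}\bigr)\to 0 \quad\text{a.e.},
\]
because $u(y)\to u(y^0)$ and $\nabla u=0$ a.e.\ on $\{u=u(y^0)\}$. Dominated convergence then gives the $o_y(1)$ in the comparison of $u_y$ with $u_{y^0}$.

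The genuine gap is in Step 3. Work in the frame where $e_1$ is the outward normal of $\partial K_{y^0}$ at $y^0$. There, $|\partial_j u(y)|\le|\nabla u(y)|$ together with Step 2 gives $\limsup_{y\to y^0}\partial_1u(y)\le|\nabla u(y^0)|=\partial_1u(y^0)$. For $j\neq1$, however, it only gives $\limsup_{y\to y^0}\partial_ju(y)\le|\nabla u(y^0)|$, while the value to beat is $\partial_ju(y^0)=0$. The paper's own write-up stops at the same point.

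What is missing is control of the \emph{direction} of $\nabla u(y)$ at nearby points. Your blow-up argument, run at $y$ instead of $y^0$, shows that $\nabla u(y)=|\nabla u(y)|\,\nu_y$ whenever $|\nabla u(y)|>0$, where $\nu_y$ is the outward normal of $\partial K_y$ at $y$. So you need $\nu_y\to e_1$, and this follows by compactness:
\begin{enumerate}
\item Take $y^k\to y^0$ with $\nu_{y^k}\to\nu$. The uniform $C^{1,\mathrm{Dini}}$ bounds let $K_{y^k}$ converge near $y^0$ to a $C^1$ domain $K^*$ with $y^0\in\partial K^*$ and outward normal $\nu$ there.
\item Since $u\le u(y^k)$ on $K_{y^k}$ and $u(y^k)\to u(y^0)$, the function $u_{y^0}$ vanishes on $K^*$.
\item Hence every blow-up of $u_{y^0}$ at $y^0$ vanishes on $\{x\cdot\nu<0\}$. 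This is compatible with the nontrivial profile $c_1x_1^+$ of Lemma \ref{lem:asymptot} only if $\nu=e_1$.
\end{enumerate}
Therefore $\partial_ju(y)=|\nabla u(y)|\,\nu_y\cdot e_j\to0$ for $j\neq1$, which closes the argument.

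Keep in mind that the componentwise claim only makes sense in this adapted frame. For a direction $e$ with $e\cdot e_1<0$, $\partial_e u$ is in general not upper semi-continuous at $y^0$. For instance, this fails when points with $\nabla u=0$, such as corners of a convex positivity set, accumulate at $y^0$.
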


The above theorem applies to the case of the heat operator, when the time derivative is bounded from below, since in that case we fall within the assumptions of the theorem, with $\Delta u \geq -\partial_t u -1 \geq -C$. It can further be generalized to divergence type operators with lower order terms.
We formulate this as a corollary.

For this purpose, we define $Q_r$ as the unit cylinder  $Q_r= B_r \times (0,1)$ and the  parabolic operator
\begin{equation}\label{eq:F}
    F[u] := \inf_{\alpha \in I} F_{\alpha} [u],
\end{equation}
where $I$ is an index set, and  $F_\alpha [u]$ are divergence type operators
$$
F_\alpha [u]  =div(A_\alpha \nabla u) + b_\alpha \cdot \nabla u + c_\alpha  u - d_\alpha \partial_t u ,
$$
where  $d_\alpha=d_\alpha(x,t)  > 0$, $A_\alpha=A_\alpha(x,t)$, $b_\alpha=b_\alpha(x,t)$,  $c_\alpha=c_\alpha(x,t)$  are bounded, 
while  $A_\alpha $  satisfies uniform ellipticity and is continuous with double-Dini modulus of continuity;\footnote{A modulus of continuity $\omega (r)$ is  Double-Dini if 
$\int_0^1 \frac{1}{r}\left( \int_0^r \frac{\omega(t)}{t}\,dt \right) dr < \infty.$
 }
see  assumptions in \cite{Mat-Pet}. Here, and later, for the time dependent problems, we shall  still denote the space gradient by $\nabla $. 

Then the following corollary holds. 

\begin{cor}\label{cor:heat}
In $Q_1$,  let $u$ satisfy         $F[u]  \geq -1, $ (see \eqref{eq:F} 
)   and suppose that for every point $(y,s) \in Q_{3/4}$, the  set $\{ u(x,t) > u(y,s)\}  \cap \{t=s\} $
can be touched from the exterior at $y$ by a (spatial) domain  $K_y$ with $\partial K_y \cap Q_{1/2}$ being 
uniformly  $C^{1,\text{Dini}}$ in $x$-variable. Suppose further that $\partial_t u \geq -C_0$.
Then $\nabla u (x , s)$ is upper semi-continuous in $Q_{1/2}$, for each fixed $s$.
\end{cor}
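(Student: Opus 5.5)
The plan is to reduce Corollary~\ref{cor:heat} to the argument behind Theorem~\ref{thm:subsol1}, working on each fixed time slice $t=s$. Fix $s$ and set $v(x):=u(x,s)$. The inequality $F[u]\ge -1$ means $F_\alpha[u]\ge -1$ for every $\alpha\in I$. Since $d_\alpha>0$ is bounded and $\partial_t u\ge -C_0$, we get $-d_\alpha\partial_t u\le C\,C_0$. Hence for every $\alpha$,
\[
\operatorname{div}(A_\alpha\nabla v)+b_\alpha\cdot\nabla v+c_\alpha v\ \ge\ -1-C C_0 \qquad\hbox{in } B_1 ,
\]
with coefficients frozen at $t=s$.

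The first point needing care is making this hold pointwise on the slice rather than only in a space-time weak sense. My approach is to mollify in $t$, or to use that the bound on $\partial_t u$ makes $u$ Lipschitz in $t$ from one side, and pass to the limit in the weak formulation. Boundedness of $u$ and $\nabla u$ then lets me absorb the lower-order terms $b_\alpha\cdot\nabla v+c_\alpha v$ into the right-hand side. Choosing a single $\alpha$ suffices, so $v$ satisfies
\[
\operatorname{div}(A\nabla v)\ge -M
\]
with $A=A_\alpha(\cdot,s)$ uniformly elliptic and double-Dini continuous.

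Next I verify the ingredients the proof of Theorem~\ref{thm:subsol1} actually uses:
\begin{itemize}
\item[(a)] \emph{Lipschitz bound.} The barrier Lemma~\ref{lem:barrier} gives Lipschitz regularity of $v$. Its proof needs a $C^{1,\text{Dini}}$ exterior touching domain, which is assumed, and a Hopf lemma for divergence operators, cited from \cite{Ap-Naz}.
\item[(b)] \emph{Reference function.} $G_y$ is now taken $A$-harmonic in $K_y$. For divergence operators with Dini-type coefficients on $C^{1,\text{Dini}}$ domains, $G_y$ is $C^1$ up to the boundary, and Hopf gives $|\nabla G_y(y)|\ge c_1>0$ uniformly in $y$.
\item[(c)] \emph{Almost monotonicity.} The ACF-type formula \eqref{eq:alm-mon} holds for $v_y=(v-v(y))_+$ and $G_y$. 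For variable coefficients I invoke the almost-monotonicity formula of \cite{Mat-Pet} (this is the reason for the double-Dini assumption), after a linear change of variables making $A(y)=\mathrm{Id}$, so that the constant $c_0$ is unchanged. The error term is integrable in $r$, so $\lim_{r\to0}\mathcal I$ exists.
\end{itemize}

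With these in hand, I define $|\nabla v(y)|$ by \eqref{eq:gradient}, in the transformed coordinates. Then I repeat verbatim the chain \eqref{eq:estimate0}--\eqref{eq:estimate2}, using Lemma~\ref{lem:estimates} to shift the center from $y$ to $y^0$. I split into the cases $|\nabla v(y^0)|=0$ and $|\nabla v(y^0)|>0$, and finish with the blowup Lemma~\ref{lem:asymptot} for the directional derivatives. The only new feature in the center shift is that the change of variables depends on $y$. Continuity of $A$ makes the resulting discrepancy $o(1)$ as $y\to y^0$.

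I expect the main obstacle to be step (c), together with the $y$-dependence of the normalization: applying the almost-monotonicity formula with variable coefficients for $v_y$, which is only a subsolution with bounded right-hand side. If the \cite{Mat-Pet} formula is stated only for solutions, I would first handle the right-hand side $-M$ exactly as in \eqref{eq:alm-mon}, by adding $M|x-y|^2/(2\lambda)$-type corrections. I would then check that the error constant is uniform in $y\in B_{3/4}$, which is what the upper semicontinuity argument needs.
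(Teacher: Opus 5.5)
Your overall route matches the paper's. You fix the slice $t=s$ and use $F_\alpha[u]\ge F[u]\ge -1$ with $\partial_t u\ge -C_0$ to discard the time derivative. You get Lipschitz regularity from a barrier and reduce to $\operatorname{div}(A\nabla v)\ge -M$ for one fixed $\alpha_0$. Then you rerun the proof of Theorem~\ref{thm:subsol1} with the almost-monotonicity formula of \cite{Mat-Pet}.

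As written, however, your steps are circular. You absorb $b_\alpha\cdot\nabla v$ into the right-hand side by invoking boundedness of $\nabla v$. Only afterwards, in (a), do you obtain that bound from Lemma~\ref{lem:barrier}. That lemma is stated and proved for the drift-free inequality $\operatorname{div}(A\nabla u)\ge -C$, and nothing in the hypotheses bounds $\nabla u$ a priori. So (a) cannot be applied to the inequality you actually have at that point.

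The paper breaks the circle as follows:
\begin{itemize}
\item It absorbs only $c_{\alpha_0}u$ (bounded since $u$ is) and $d_{\alpha_0}\partial_t u$, obtaining $\operatorname{div}(A\nabla v)+b\cdot\nabla v\ge -C_1$.
\item It runs the barrier argument for the operator \emph{with} drift, $\operatorname{div}(A\nabla\,\cdot\,)+b\cdot\nabla$. Its Dirichlet problem is solvable by Lax--Milgram, and its solutions are $C^1$ up to $C^{1,\mathrm{Dini}}$ boundaries by \cite{Dong-2018}.
\item Only once $v$ is known to be Lipschitz does it absorb $b\cdot\nabla v$, reaching $\operatorname{div}(A\nabla v)\ge -C_2$.
\end{itemize}

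Two smaller corrections. In (a), the barrier needs an \emph{upper} gradient bound, not the Hopf lemma. The barrier $h$ must grow at most linearly off $\partial K_y$, i.e.\ be $C^1$ (or Lipschitz) up to $\partial K_y$. This is the same boundary regularity you invoke in (b) for $G_y$; Hopf is used only to get $|\nabla G_y(y)|\ge c_1>0$.

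In (c), your fallback is unnecessary, since the formula of \cite{Mat-Pet} already covers nonnegative pairs with $Lh_\pm\ge -1$ and $h_+h_-=0$. It would also not work: adding a quadratic $M|x-y|^2$ to $v_y$, or taking $(v-v(y)+M|x-y|^2)_+$, would in general spoil the condition $h_+h_-=0$ with $h_-=G_y$.

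With the ordering repaired, the rest of your plan agrees with the paper's proof. This includes your attention to the slice-wise meaning of the inequality and to the $y$-dependent normalization $A(y)=\mathrm{Id}$, which the paper leaves implicit.
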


\begin{proof}
By definition, we see that for any $\alpha \in I$, we have 
$F_\alpha[u] \geq  F [u]$, and since  the time derivative is bounded from below, we have for a fixed  $\alpha_0 \in I$
$$
div(A_{\alpha_0}  \nabla u)  + b_{\alpha_0} \cdot \nabla u \geq  - c_{\alpha_0}  u +  d_{\alpha_0} \partial_t u -1 \geq  - c_{\alpha_0}  u -d_{\alpha_0} C_0 -1
$$
and since $u$ and $c_{\alpha_0} $ are bounded, we have 
$$
div(A_{\alpha_0}  \nabla u)  + b_{\alpha_0}  \cdot \nabla u \geq   -C_1.
$$
As in  the Laplacian case, we need  a barrier argument to conclude that $u$ is Lipschitz in $x$. 
For the divergence operator,  
$div(A_{\alpha_0} \nabla u)   + b_{\alpha_0}\cdot \nabla u $, the existence of weak solutions to the (elliptic)  Dirichlet problem, serving as a barrier to $u(x,s)$, with $s$ fixed, follows from the standard minimization or Lax-Milgram theory; see e.g. \cite[Section 6.2]{Evans}.
The up to the boundary  $C^1$-smoothness of solutions  for $C^{1,Dini}$ boundaries also follows from    \cite[Theorem 1.3]{Dong-2018}.\footnote{A parabolic version of this theorem also exists, see  \cite[Theorem 1.1]{Dong-2021}.} 
In particular, we have 
$$
div(A_{\alpha_0}  \nabla u ) \geq -     b_{\alpha_0} \nabla u   -C_1 \geq -C_2
$$

Since we assume $A_{\alpha_0}$ to be double-Dini, we can carry out   the almost monotonicity formula of   \cite[Theorem IV]{Mat-Pet} in the same manner as we did earlier. 
\end{proof}

\begin{rem}
    The above arguments will work also for general operators $F_\alpha [u]$, admitting an ACF-type (almost) monotonicity formula.  
\end{rem}

\section{Application to the Dirichlet problem}

Using ideas from our analysis, we formulate the following theorem for solutions to the Dirichlet problem. To state the result, we let  $Q_r = B_r \times (-1,1)$, and $\Omega $
be a domain in $ \R^{n+1}$ and define the operator 
$$
{\mathcal L}u= div(A \nabla u) + b \cdot \nabla u + c  u -  \partial_t u ,
$$
where $A,b,c$ are bounded and $A$ satisfy uniform ellipticity, and is continuous with double-Dini modulus of continuity.
Moreover, let  $u$ be a solution of
\begin{equation}\label{eq:pde}
    {\mathcal L}u = f  \quad \text{in } Q_1 \cap \Omega,
\qquad (0,0) \in \partial \Omega
\end{equation}
with Dirichlet boundary data $g$, only on $\partial \Omega \times (-1,1)$; this allows us to also apply our next theorem to the elliptic case. In what follows below, we let $C$ be a universal constant depending on the involved ingredients and may change value from one appearance to next.
We assume the following:

\begin{itemize}
\item[(i)] The operator ${\mathcal L}$ and the function $f$ are  such that the solution $u$ to \eqref{eq:pde} is $C^{1,Dini} (\Omega  \cap Q_{9/10})$ with respect to  spatial variables.

\item[(ii)] The time derivative satisfies $\partial_t u \geq -C$.

\item[(iii)] $g \in C^{1,Dini}_x$, $div(A \nabla g) \leq C$, and $\partial_t g \geq -C$. 

\item[(iv)]  $f \geq -C  $.

\item[(v)] The principal  part $div(A \nabla u)$ admits an elliptic ACF (almost) monotonicity formula, as in \cite[Theorem IV]{Mat-Pet}.

\item[(vi)] For every $y \in \partial \Omega \cap B_{4/5}$, the (spatial) gradient $\nabla u(y,s)$ is defined as above using the ACF-monotonicity formula; see \eqref{eq:gradient} and the discussion preceding \eqref{eq:sub2}.
\end{itemize}

Under these assumptions, we have the following theorem.

\begin{thm}\label{thm:bdry}
Let $u$ be a solution to  the  Dirichlet problem in $\Omega$ as stated above, with ${\mathcal L}$ having the properties mentioned there.
Assume that there is $r_0 > 0$, such that 
for every boundary point $(y,s) \in \partial \Omega \cap Q_{3/4}$, 
the domain $\Omega$ can be touched from the exterior at $y$ by a spatial domain 
$K_y$ whose boundary 
$\partial K_y \cap Q_{r_0} (y)$ is uniformly $C^{1,\mathrm{Dini}}$ in the $x$-variable. 
Then  for every fixed $t$, $\nabla u (x,t)$ is upper semi-continuous in the $x$-variable in 
$\overline\Omega \cap Q_{1/2}$.
\end{thm}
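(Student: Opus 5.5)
We reduce Theorem \ref{thm:bdry} to the interior argument behind Theorem \ref{thm:subsol1} and Corollary \ref{cor:heat}. Fix a time $s$ and work with the spatial slice $x\mapsto u(x,s)$. The first step is to subtract the boundary data: set $w:=u-g$ in $\Omega\cap Q_1$, so that $w=0$ on $\partial\Omega\times(-1,1)$. By (ii)--(iv), and since $u,g$ are bounded and $C^{1,\mathrm{Dini}}_x$ by (i) and (iii), we get
\[
\operatorname{div}(A\nabla w)=f-b\cdot\nabla u-cu+\partial_t u-\operatorname{div}(A\nabla g)\geq -C
\]
in $\Omega\cap Q_{9/10}$ on each slice. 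We then split into positive and negative parts, $w_\pm=\max(\pm w,0)$. Both are continuous on $\overline\Omega\cap Q_{9/10}$ and vanish on $\partial\Omega$. Extending them by zero outside $\Omega$, each $w_\pm$ satisfies $\operatorname{div}(A\nabla w_\pm)\ge -C$ in the distributional sense. This holds in the interior because the maximum of two subsolutions is a subsolution, and across $\partial\Omega$ because extension by zero of a nonnegative function vanishing on the boundary preserves subsolutions. This puts us in the setting of (v), the almost monotonicity formula of \cite[Theorem IV]{Mat-Pet}, at every point $(y,s)$ with $y\in\partial\Omega$. Since $\nabla g$ is continuous, upper semicontinuity of $\nabla u=\nabla w+\nabla g$ reduces to that of $\nabla w$ in the directional sense used earlier, since $\nabla g$ enters each $\partial_j u$ additively.

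At interior points, $u$ is $C^{1,\mathrm{Dini}}$ by (i), so $\nabla u$ is continuous there. The only issue is therefore approach to a boundary point $y^0\in\partial\Omega\cap B_{1/2}$, along interior points $y\to y^0$ or along boundary points $y\to y^0$. For boundary points $y$ we repeat the argument of Section 2 verbatim. The role of $D^+_y$ is played by $\Omega$ itself, since $w_\pm$ are supported in $\overline\Omega$. The exterior domain $K_y$ touching $\Omega$ at $y$ gives the comparison function $G_y$ with $|\nabla G_y(y)|\ge c_1>0$, uniformly in $y$ by the uniform $C^{1,\mathrm{Dini}}$ assumption and the Hopf lemma \cite{Ap-Naz}. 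We then apply the almost monotonicity to the pair $(w_\pm,G_y)$ to bound $c_0^2|\nabla w_\pm(y)|^2|\nabla G_y(y)|^2$ by $(1+r_\epsilon)I(r_\epsilon,y,w_\pm)I(r_\epsilon,y,G_y)+Cr_\epsilon$. Finally, shifting the center from $y$ to $y^0$ via Lemma \ref{lem:estimates} gives $I(r_\epsilon,y,w_\pm)=I(r_\epsilon,y^0,w_\pm)+o_\epsilon(1)$. Here no level-set perturbation is needed, because the functions $w_\pm$ are fixed, independent of $y$. This yields $\limsup|\nabla w_\pm(y)|\le|\nabla w_\pm(y^0)|$. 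Lemma \ref{lem:asymptot} then gives the asymptotic $w_\pm=c_\pm(x_1-y^0_1)^+ +o(|x-y^0|)$, which identifies the direction and upgrades the statement to the components $\partial_j w$.

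For interior points $y\to y^0$ we use the fact that, for $y\in\Omega$, $|\nabla w(y)|^2$ is the pointwise value, which equals $c_0^{-1}\lim_{r\to0}I(r,y,w_\pm)$. We would bound it by $I(\rho,y,w_\pm)$ at a scale $\rho\approx\operatorname{dist}(y,\partial\Omega)$ using the interior $C^{1,\mathrm{Dini}}$ estimate. We then compare with $I(\rho',y^*,w_\pm)$, where $y^*\in\partial\Omega$ is a closest point and $\rho'\sim\rho$, and the boundary case then applies. This is the main obstacle. It requires a quantitative statement that $\nabla w$ near the boundary, at distance $\rho$, is controlled by the $I$-averages at the nearest boundary point at scale comparable to $\rho$, up to $o(1)$. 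We would try to obtain this from the blowup/asymptotic expansion of Lemma \ref{lem:asymptot}, made uniform along $\partial\Omega$ by the uniform exterior $C^{1,\mathrm{Dini}}$ touching, together with the almost monotonicity at $y^*$ to pass from scale $\rho'$ to scale $\epsilon$.
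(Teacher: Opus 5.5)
\textbf{Boundary-to-boundary step.} This part of your proposal is essentially the paper's. You use a fixed function vanishing off $\Omega$, so no level-set perturbation is needed, and Lemma~\ref{lem:estimates} to move the centre. You need a uniform lower bound on $|\nabla G_y(y)|$; the paper gets it by taking every $K_y$ to be a rigid motion of one model domain $\{x_1>|x'|\omega(|x'|)\}$, so that $I(r_\epsilon,y,G_y)=I(r_\epsilon,y^0,G_{y^0})$. Lemma~\ref{lem:asymptot} then handles the components.

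\textbf{Main gap: interior points approaching the boundary.} The genuine gap is the case you flag yourself: $y\in\Omega$ tending to $y^0\in\partial\Omega$. Your plan does not close it. The interior $C^{1,\mathrm{Dini}}$ estimate at scale $\rho=\operatorname{dist}(y,\partial\Omega)$ only gives equicontinuity of the rescaled gradients. It does not make $|\nabla w(y)|^2$ close to an $I$-average at scale $\rho$; note also that at an interior point with $w(y)\neq0$ one has $\lim_{r\to0}I(r,y,w_+)=2c_0|\nabla w(y)|^2$, since \eqref{eq:c0} is normalised for half-balls. Moreover, Lemma~\ref{lem:asymptot} is a blow-up at one fixed point. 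So ``making it uniform along $\partial\Omega$'' is the whole difficulty, not a routine addition.

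The paper's missing idea is a compactness/contradiction argument with moving centres:
\begin{enumerate}
\item Suppose $y^j\to y^0$ with $|\nabla V(y^j)-\mathbf a|\ge\delta$, where $\mathbf a=\nabla v(y^0,s)$.
\item Rescale $V_j(x)=V(r_jx+y^j)/r_j$ with $r_j=\operatorname{dist}(y^j,\partial\Omega)$, recentred at a nearest boundary point $z^j$.
\item The uniform exterior touching puts the limit domain inside $\{x_1>0\}$, and the constant in \eqref{eq:-C} scales away. Hence the limit satisfies $\operatorname{div}(A_0\nabla V_0)\ge0$ and vanishes outside that half-space.
\item Feeding $V_0$ and its reflection across $\{x_1=0\}$ into the ACF formula forces $V_0$ to be linear or zero.
\item This contradicts $|\nabla V_0(\mathbf e_1)-\mathbf a|\ge\delta>\delta/4>|\nabla V_0(0)-\mathbf a|$, where the last inequality comes from the boundary step along $z^j\to y^0$.
\end{enumerate}
Uniformity along the boundary thus comes from the contradiction sequence, not from a quantitative expansion.

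\textbf{Error in the set-up: $w_-$ is not a subsolution.} The function $w_-=\max(-w,0)$, extended by zero, need not satisfy $\operatorname{div}(A\nabla w_-)\ge -C$. Your ``maximum of two subsolutions'' argument would need $-w$ to be a subsolution, but it is a supersolution. The hypotheses are one-sided ($f\ge -C$, $\partial_t u\ge -C$, $\operatorname{div}(A\nabla g)\le C$) and give no upper bound on $\operatorname{div}(A\nabla w)$. For instance, $w=-x_1+x_1^{3/2}$ on $\{x_1>0\}$ is $C^{1,1/2}$ with $\Delta w\ge0$. Yet $\Delta w_-=-\tfrac34x_1^{-1/2}$ near $\{x_1=0\}$, which is unbounded below. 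So $w_-$ cannot be put into the almost monotonicity formula, and your bound on $\limsup|\nabla w_-(y)|$ is unsupported.

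The paper never uses a negative part. It works with the single function $V$, which vanishes off $\Omega$, and defines the boundary gradient through \eqref{eq:gradient}. Since $V(y)=0$ at boundary points, that definition only involves $\max(V-V(y),0)=V_+$. You should drop $w_-$ or give a separate argument for it.
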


The requirement that  the boundary be $C^{1,Dini}$ from the exterior at all points close to $y$, is strict, since otherwise it may be seen that the exterior of the zig-zag domain (mentioned in the introduction) can serve as  the domain $\Omega$, and the theorem fails  for this domain.

\begin{proof}
By standard elliptic/parabolic theory $u$, and hence $u-g$, are $C^{1,Dini}$ in $x$-variable in  $\Omega \cap Q_ {9/10}$, and uniformly Lipschitz up to the boundary. Define   $v:=u-g$, and use  properties assumed for $g$ to arrive at
$${\mathcal L}v = {\mathcal L}(u-g) = f  -\hbox{div} (A\nabla g  ) -b \cdot \nabla g - cg + \partial_t g 
\geq  - C ,
$$
with $v$ being $C^1$ in $x$-variable in  $\Omega \cap Q_ {9/10}$.
 Similarly as earlier, using that solution is uniformly Lipschitz  in $x$-variable in  $\overline \Omega$, and that $\partial_t u \geq -C$,  
 we can conclude that in $\Omega \cap Q_{99/100}$
\begin{equation}\label{eq:-C}
    div(A \nabla v)   \geq    -C.
\end{equation}

Let us now fix a time level, say $t=s$, and any  boundary point $(y^0,s) \in \partial \Omega \cap Q_{1/2}$. 
Using ACF-monotonicity formula, we define $\nabla v $ at the  boundary points in a similar way to what  we did in \eqref{eq:gradient}.

We now prove the upper semi-continuity at $(y^0,s)$
for points $(y,s) \in  \partial \Omega$, with $y$ approaching $y^0$. To do this we may, using the exterior touching property at the boundary, assume that at each boundary point we have a touching domain $K_y$ which is a translation and rotation of $\{ x: \ x_1 > |x'|\omega (|x'|)\}$, with $\omega (r)$ being a Dini modulus of continuity, and independent of the point $(y,s)$. 

For simplicity of notation, we suppress the $s$-variable, and 
write $V(x) = v(x,s)$.

Now, the time $s$ being fixed, we apply the monotonicity formula to  $V$ and $G_y$, as before, at $y$, assuming $|y - y^0|< \epsilon^2$,    and $r_\epsilon= \epsilon + \epsilon^2$ and small. Observe that $V =0 $ in $\Omega^c \cap \{t=s\}$ so there is no need to denote it by $V_y$, as we did in the previous section. Indeed, as in \eqref{eq:estimate0} we have 
\begin{equation*}
    c_0^2|\nabla V (y) |^2|\nabla G_y (y) |^2
\leq   \mathcal I(r_\epsilon,y,V,G_y) = I(r_\epsilon,y,V) I(r_\epsilon,y,G_y)  .
\end{equation*}
We observe that $I(r_\epsilon,y,G_y) = I(r_\epsilon,y^0,G_{y^0})$
since  
 $K_y$ is a rotation and translation of $K_{y^0}$ and hence the integral  $I(r_\epsilon,y,G_y)$ does not change under this transformation. Therefore 
\begin{equation}\label{eq:estimate3}
    c_0^2|\nabla V (y) |^2|\nabla G_y (y) |^2
\leq   \mathcal I(r_\epsilon,y,V,G_y) = I(r_\epsilon,y,V) I(r_\epsilon,y^0,G_{y^0})  .
\end{equation}
The rest of the proof follows in the same way as in the previous section. 
In particular, we have the  upper semi-continuity of $|\nabla v(y,s)|$ along the boundary, for fixed $s$.  

Now, as in the previous section, using blow-up and the argument in Lemma \ref{lem:asymptot} in Appendix we conclude that $\partial_j v$ are upper semi-continuous, for all directions $j =1, \cdots ,n$.

We   consider points $(y,s) \in \Omega$, and close to $(y^0,s)$, and denote 
$${\bf a} := \nabla v (y^0, s).$$
It suffices to show that  for $\delta >0$ there is $\epsilon >0$ such that for 
$|y-y^0 |< \epsilon $ we must have 
$|\nabla V (y) -{\bf a}| < \delta $.
Suppose, on the contrary, that this fails. Then there is a sequence $y^j \to y$, such that $|y^j - y^0 |< 1/j $, but 
\begin{equation}\label{eq:bigger}
    |\nabla V (y^j) -{\bf a}| \geq  \delta .
\end{equation}

Define the rescaled functions
\[
V_j(x) = \frac{V(r_j x + y^j)}{r_j},
\qquad
r_j = \operatorname{dist}(y^j, \partial D),
\]
and let   $z^j \in \partial \Omega$ be  any of the  closest boundary point to $y^j$ (to be used below).
Since $V$ is uniformly Lipschitz, the sequence $\{V_j\}$ is uniformly bounded and 
uniformly Lipschitz in the balls $B_{1/(2r_j)}$.

We now consider the geometry of the domain. Because $\Omega$ admits exterior touching by $C^{1,\mathrm{Dini}}$ surfaces, the rescaled domains
\[
\Omega_j = \frac{1}{r_j}\bigl(\Omega - z^j\bigr),
\]
 converge (up to a subsequence) to a limit domain $\Omega_0$. Correspondingly, the functions $v_j$ converge locally uniformly to a limit function $v_0$.

The exterior regularity of $\Omega$ implies that, after a suitable rotation and translation, the limit domain satisfies
\begin{equation}\label{eq:omega0}
    \Omega_0 \subset \{x_1 > 0\}.
\end{equation}
Moreover, $B_1 \subset \Omega_j$,  the points 
$y^j$ translate to ${\bf e}_1= (1,0, \cdots , 0)$,  $z^j $ translate  to the origin, 
and 
\[\lvert \nabla V_j({\bf e}_1) - {\bf a} \rvert = 
\lvert \nabla V(y^j) - {\bf a} \rvert \geq \delta.
\]
On the other hand,  since $y^j \to y^0 $, we also have $z^j \to y^0$, and hence for large values of $j$ we must have 
\begin{equation*} 
  |\nabla V_j (0) - {\bf a} | =   |\nabla V (z^j) -{\bf a}| <  \delta/4 ,
\end{equation*}
As $j $ tends to infinity, 
these two inequalities turn into 
\begin{equation} \label{eq:inequalies}
  \lvert \nabla V_0({\bf e}_1) - {\bf a} \rvert \geq  \delta  > \delta/4 > 
  |\nabla V_0 (0) - {\bf a} | .
\end{equation}

Now we study the limit function $V_0$, in the light of these inequalities. By \eqref{eq:omega0} we have already established that $\Omega_0 \subset \{x_1 > 0\} $. It is also apparent that $V_0$ satisfies a limit PDE, with constant coefficient, due to the scaling and continuity of the coefficients. In particular we have 
$$
div(A_0 \nabla V_0)   \geq    0,
$$
where $A_0$ is a $A(0)$.
Observe that the linear scaling (and blow-up)  would annihilate the constant $-C$ to the right hand side of the inequality in \eqref{eq:-C}.

We now reflect $v_0$ in the plane $\{x_1 =0\}$ and  denote it by $\tilde V_0$, which along with $V_0$ in the monotonicity formula gives us that either $V_0$ is a linear function or identically zero. Either of these cases are  in conflict with \eqref{eq:inequalies}.

\end{proof}

\bigskip

\appendix

\section{}

Here, we give proofs of those technical lemmas that are needed in the paper. Although the proofs are elementary,  we include them  for the reader's convenience. 

\begin{lem}\label{lem:barrier}(Barrier argument)
Let $u$ satisfy 
$$div(A \nabla u) \geq -C, \qquad \hbox{in } B_1,
$$
where $A$ satisfies a uniform ellipticity condition. Then, under the geometric assumption introduced in this paper, see \eqref{eq:Dy}--\eqref{eq:norm}, the function $u$ is Lipschitz.
\end{lem}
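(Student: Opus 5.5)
We prove a one-sided bound at each point and then symmetrize it. Fix $y\in B_{2/3}$ and put $M:=\sup_{B_1}|u|$. The goal is
\[
u(x)-u(y)\le L|x-y| \qquad \text{for } x\in B_{r_0/2}(y),
\]
with $L$ depending only on $M$, $C$, the ellipticity of $A$ and the uniform $C^{1,\text{Dini}}$ character of $\partial K_y$. Applying this with the roles of the two points exchanged gives $|u(x)-u(y)|\le L|x-y|$ for all nearby $x,y$. A covering argument then yields Lipschitz continuity on $B_{1/2}$, which is the region used in the paper.

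\textbf{Barrier.} Set $w:=u-u(y)$. By the touching property, $D_y^+=\{w>0\}$ lies outside $K_y$ near $y$, and $y\in\partial K_y$. Hence $w\le 0$ on $\overline{K_y}\cap B_{r_0}(y)$.

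Build a barrier in the complementary region $\Omega_y:=B_{r_0}(y)\setminus\overline{K_y}$. Since $K_y$ touches at $y$ from outside $D_y^+$, the set $\Omega_y$ contains $\{w>0\}\cap B_{r_0}(y)$, and $\partial\Omega_y$ is uniformly $C^{1,\text{Dini}}$ near $y$. Let $\Phi$ solve
\[
\mathrm{div}(A\nabla\Phi)=-C \ \text{in } \Omega_y,\qquad \Phi=0 \ \text{on } \partial K_y\cap B_{r_0}(y),\qquad \Phi=2M \ \text{on } \partial B_{r_0}(y)\setminus\overline{K_y}.
\]
If necessary, first smooth $\Omega_y$ near the corner $\partial B_{r_0}(y)\cap\partial K_y$ (for instance by replacing $B_{r_0}$ with a smoothed subdomain), keeping the portion $\partial K_y\cap B_{r_0/2}(y)$ unchanged. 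Existence of $\Phi$ follows from Lax--Milgram.

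Next compare. On $\partial\Omega_y$ we have $w\le\Phi$: on the $\partial K_y$ part $w\le0=\Phi$, and on the spherical part $w\le 2M$. In $\Omega_y$ we have $\mathrm{div}(A\nabla w)\ge -C=\mathrm{div}(A\nabla\Phi)$. The weak comparison principle then gives $w\le\Phi$ in $\Omega_y$, and trivially $w\le 0\le\Phi$ on the $K_y$ side.

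\textbf{Main obstacle: gradient bound for $\Phi$.} We need $|\nabla\Phi|\le L$ up to $\partial K_y\cap B_{r_0/2}(y)$, uniformly in $y$. This is a boundary $C^1$ estimate on $C^{1,\text{Dini}}$ domains, and $\Phi$ vanishes there. For the Laplacian (Theorem \ref{thm:subsol1}) the estimate is classical and follows, for example, from Widman's work \cite{Widman}. For divergence-form operators with Dini (or double-Dini) coefficients, which is the setting of Corollary \ref{cor:heat}, we invoke \cite[Theorem 1.3]{Dong-2018}. In that result the constants depend only on the Dini modulus and the $C^{1,\text{Dini}}$ norm, and these are uniform in $y$ by \eqref{eq:norm}.

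Given the gradient bound, $\Phi(x)\le L\,\mathrm{dist}(x,\partial K_y)\le L|x-y|$, since $\Phi=0$ on $\partial K_y$ and $y\in\partial K_y$. This yields the one-sided estimate stated at the start.

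\textbf{Lower bound.} The reverse inequality $u(y)-u(x)\le L|x-y|$ comes from the same argument with the roles of $x$ and $y$ swapped. Since $x$ is itself a point where the touching property holds, we compare at $x$ and get $u(y)-u(x)\le L|y-x|$. This completes the two-sided estimate.

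Uniformity in $y$ is essential for the covering step. It is guaranteed by the uniform $r_0$ and the uniform $C^{1,\text{Dini}}$ norm assumed in \eqref{eq:norm}.
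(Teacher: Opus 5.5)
Your argument is correct and is essentially the paper's proof: you compare $u-u(y)$ with a barrier solving $\mathrm{div}(A\nabla \Phi)=-C$ in $B_{r_0}(y)\setminus\overline{K_y}$ with zero data on $\partial K_y$, use the barrier's linear growth off $\partial K_y$ to get the one-sided bound, and obtain the reverse inequality by applying the same estimate at the other point. The differences only clarify the paper's argument: you prescribe the constant $2M$ instead of $u$ on the spherical part, and you make explicit (via \cite{Widman} and \cite{Dong-2018}) that the barrier's boundary gradient bound needs Dini-type continuity of $A$, not just the uniform ellipticity in the lemma's statement, which the paper's phrase ``$h$ has linear growth from the boundary'' uses only implicitly.
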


\begin{proof} We adapt the notation used earlier, and consider a point $y^0 \in B_{1/2}$, and the super-level set $D^+_{y^0}$.
     Since for each $y^0 \in B_{r_0}$ we have a touching $C^{1, Dini}$ domain $K_{y^0}$ in $B_{r_0}(y^0)\setminus D^+_{y^0}$, for some $r_0$ independent of $y^0$, 
     we may consider a  function $h$ in $K_{y^0}^c \cap B_{1/2} (y^0)$, with boundary values $u$ on $\partial B_{r_0} (y^0) \cap K^c_{y^0}$,  $h= u(y^0)$ on $\partial K^c_{y^0} \cap  B_{r_0} (y^0)$, and $div(A \nabla h) = -C$ in $K_{y^0}^c \cap B_{r_0} (y^0)$.
By the comparison principle $u \leq h$. Since $h$ has linear growth from the boundary, it implies that $u$ has sub-linear growth, from $y^0$. This can be done for all boundary points on the super-level sets, implying that $u$ has sub-linear growth from the boundary, while looking from super-level sets. In conclusion we have
\begin{equation}\label{eq:Lip-est}
    u(y) \leq u(y^0) + C|y^1-y^0|, \qquad \forall \ y^1 \in   B_{r_0}(y^0) \cap D^+_{y^0}.
\end{equation}
Now for points $y^1$ in $B_{r_0}(y^0) \setminus  D^+_{y^0}$, we consider the super-level set 
$D^+_{y^1}$ and argue in the same way to arrive at 
$$
u(y^0) \leq u(y^1) + C|y^1-y^0|,  \qquad \forall \  y^1 \in B_{r_0}(y^0) \setminus  D^+_{y^0}
$$
which together with the estimate \eqref{eq:Lip-est} implies $u$  has a  linear growth from $y^0$. Observe that the Lipschitz constant $C$ is independent of $y^0$.
This applies to all points in $B_{1/2}$, and 
therefore $u$ is uniformly  Lipschitz in $B_{1/2}$.

\end{proof}

\begin{lem}\label{lem:estimates}
 We prove    \eqref{eq:estimate}, i.e.
 \begin{equation}
    I(r_\epsilon,y,u_y) =  I(r_\epsilon,y^0,u_y) + o_\epsilon (1).
\end{equation}
\end{lem}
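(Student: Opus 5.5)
The plan is to compare the two weighted integrals directly, using only that $u$ is uniformly Lipschitz (Lemma \ref{lem:barrier}), so $|\nabla u_y|\le L$ a.e.\ with $L$ independent of $y$. Write $\rho=r_\epsilon$ and $d=|y-y^0|<\epsilon^2$, so that $\rho=\epsilon+d$ and $d/\rho\le \epsilon$. Then
\[
I(\rho,y,u_y)-I(\rho,y^0,u_y)=\frac{1}{\rho^2}\int_{B_\rho(y)\triangle B_\rho(y^0)}\frac{\pm|\nabla u_y|^2}{|x-\cdot|^{n-2}}\,dx+\frac{1}{\rho^2}\int_{B_\rho(y)\cap B_\rho(y^0)}|\nabla u_y|^2\Bigl(\frac{1}{|x-y|^{n-2}}-\frac{1}{|x-y^0|^{n-2}}\Bigr)dx .
\]
I would bound each term by $L^2$ times a purely geometric quantity, and then scale by $x=y^0+\rho z$. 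This turns the estimate into showing that
\[
\int_{B_1(0)\triangle B_1(\eta)} \frac{dz}{|z-\cdot|^{n-2}} \to 0
\quad\text{and}\quad
\int_{B_2}\Bigl|\frac{1}{|z-\eta|^{n-2}}-\frac{1}{|z|^{n-2}}\Bigr|dz\to 0
\]
as $\eta=(y-y^0)/\rho\to0$. Note that $|\eta|\le\epsilon$ and that the $\rho^{-2}$ factor cancels exactly with the scaling of $dx/|x|^{n-2}$.

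For the first term, the symmetric difference has measure $O(|\eta|)$. The kernel $|z|^{2-n}$ is in $L^p(B_2)$ for some $p>1$ (any $p<n/(n-2)$), so H\"older gives a bound $O(|\eta|^{1-1/p})$. For the second term, translation is continuous in $L^1$: $\||\cdot-\eta|^{2-n}-|\cdot|^{2-n}\|_{L^1(B_2)}\to0$. Both quantities are therefore $o_\epsilon(1)$, uniformly in $y$. In dimension $n=2$ the weight is constant and only the symmetric-difference term survives, which is $O(\epsilon)$.

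The only point needing care is that no regularity beyond the uniform Lipschitz bound is used, together with the fact that the scaling is exactly critical, so that no factor of $\rho$ is lost. The singularity of the kernel at the two different centers is the main obstacle. I would handle it as described above, by the $L^p$ integrability of $|z|^{2-n}$ combined with continuity of translations in $L^1$. These give the claimed $o_\epsilon(1)$ independently of the specific point $y$ satisfying \eqref{eq:distance}.
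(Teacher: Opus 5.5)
Your argument is correct and uses the same decomposition as the paper's proof. There are the two halves of $B_{r_\epsilon}(y)\triangle B_{r_\epsilon}(y^0)$, plus the intersection carrying the difference of the two kernels, and the uniform Lipschitz bound is the only input. The difference lies in how the pieces are estimated.

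\emph{Intersection term.} The paper applies the mean value theorem to $t\mapsto t^{2-n}$ together with $\bigl||x-y|-|x-y^0|\bigr|\le|y-y^0|$. This gives the pointwise bound $C|y-y^0|\min\{|x-y|,|x-y^0|\}^{1-n}$ and, after integrating, the explicit rate $C_1|y-y^0|/r_\epsilon\le C_1\epsilon$. You use continuity of translations in $L^1$ instead. That gives no rate, but it needs no computation and works for any locally integrable kernel. The paper's bound is just the quantitative form of yours, coming from $\nabla|z|^{2-n}\in L^1_{\mathrm{loc}}$. Since only $o_\epsilon(1)$ is used later, nothing is lost.

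\emph{Symmetric-difference terms.} The paper only says the domains are small, which does not suffice on its own because of the $r_\epsilon^{-2}$ normalization. Your rescaling $x=y^0+\rho z$ is what actually justifies this step. The H\"older argument there is more than you need, though. On $B_1(\eta)\setminus B_1(0)$ one has $|z-\eta|\ge 1-|\eta|$, and on $B_1(0)\setminus B_1(\eta)$ one has $|z|\ge 1-|\eta|$. So the kernel attached to each piece is bounded by $2^{n-2}$, and both terms are directly $O(|\eta|)=O(\epsilon)$. The singularities of the kernels matter only for the intersection term, not for the symmetric difference.
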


\begin{proof}
This can be done by a direct calculation. We have 
$$
\Big| I(r_\epsilon,y,u_{y}) -  I(r_\epsilon,y^0,u_y)| \le
\frac{1}{r_\epsilon^2 }\int_{B_{r_\epsilon} (y) \setminus 
B_{r_\epsilon} (y^0)}  \frac{|\nabla u|^2}{|x-y|^{n-2}} + 
\frac{1}{r_\epsilon^2 }\int_{B_{r_\epsilon} (y^0) \setminus 
B_{r_\epsilon} (y)}  \frac{|\nabla u|^2}{|x-y^0|^{n-2}}
+
$$
$$
+\frac{1}{r_\epsilon^2 }\int_{B_{r_\epsilon} (y^0) \cap 
B_{r_\epsilon} (y)} |\nabla u|^2\left|\frac{1}{|x-y|^{n-2}}- \frac{1}{|x-y^0|^{n-2}}\right|.
$$
Since $|y -y^0 | < \epsilon^2$,  the  integration domains in 
the first two integrals on the right side of the inequality are small, and tend to zero as $\epsilon $ tends to zero.
 As to the last integral, it can be estimated in the following way, using the mean-value theorem and \eqref{eq:distance}:
$$
\frac{1}{r_\epsilon^2 }\int\limits_{B_{r_\epsilon} (y^0) \cap 
B_{r_\epsilon} (y)} |\nabla u|^2\left|\frac{1}{|x-y|^{n-2}}- \frac{1}{|x-y^0|^{n-2}}\right| \le C  \frac{|y-y^0|}{r_\epsilon^2 }\int\limits_{B_{r_\epsilon} (y^0) \cap 
B_{r_\epsilon} (y)} \frac{1}{\min\{|x-y|, |x-y^0|\}^{n-1}}\le 
$$
$$
\le 2C
\frac{|y-y^0|}{r_\epsilon^2 }\int_{B_{r_\epsilon} (y^0)} \frac{1}{|x-y^0|^{n-1}} =  C_1\frac{|y-y^0|}{r_\epsilon} = 
C_1\frac{|y-y^0|}{\epsilon+|y-y^0|}\le C_1\frac{|y-y^0|}{\epsilon}\le C_1\frac{\epsilon^2}{\epsilon} = C_1\epsilon. 
$$

\end{proof}

\begin{lem}\label{lem:asymptot}(Asymptotic development)
Under the assumptions of Theorem \ref{thm:subsol1}, or Corollary \ref{cor:heat}, if 
 $|\nabla u (y^0)|\neq 0$,  
  then, after a rigid motion, 
\begin{equation}\label{eq:asymptotic2}
    u(x) = c_1(x_1 -y_1^0)^+ + o(1) \qquad \hbox{in } D^+_{y^0} , \quad \hbox{near } y^0.
    \end{equation} 
\end{lem}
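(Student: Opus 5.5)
We argue by blow-up, using that $u$ is uniformly Lipschitz (Lemma \ref{lem:barrier}). After a translation and rotation we take $y^0=0$, $u(0)=0$, and let the exterior touching domain $K_{0}$ have inner normal to $D^+_0$ at the origin equal to $e_1$. Then $\partial K_0\cap B_{r_0}$ is a $C^{1,\mathrm{Dini}}$ graph tangent to $\{x_1=0\}$, and $D^+_0\cap B_{r_0}\subset\{x_1> -|x'|\omega(|x'|)\}$. Set $u_0:=u_{y^0}=u^+$ (recall $u(0)=0$) and form the rescalings
\[
u_r(x):=\frac{u^+(rx)}{r}.
\]
These are uniformly Lipschitz and vanish at $0$. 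Along a subsequence $r_k\to0$ they converge locally uniformly, and weakly in $W^{1,2}_{loc}$, to a Lipschitz limit $u^*\ge 0$. We will show $u^*=c_1x_1^+$ with $c_1=|\nabla u(0)|$, independent of the subsequence, which gives \eqref{eq:asymptotic2}, understood as $u(x)=c_1x_1^+ + o(|x|)$.

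\textbf{Step 1 (support and equation of the limit).} The rescaled touching domains $K_0/r$ converge to the half-space $\{x_1<0\}$, since the Dini graph flattens. Hence $u^*=0$ in $\{x_1<0\}$. The function $u^+$ is subharmonic with $\Delta u^+\ge -1$ in the weak sense, and rescaling turns the constant into $-r$. So $u^*$ is subharmonic, nonnegative, and vanishes on $\{x_1\le 0\}$. The same scaling applies to $G_0$: $G_0(rx)/r\to |\nabla G_0(0)|\,x_1^-$, because $G_0$ is $C^1$ up to the $C^{1,\mathrm{Dini}}$ boundary.

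\textbf{Step 2 (constancy of the ACF functional on the blow-up).} For fixed $\rho$,
\[
\mathcal I(\rho,0,u_{r},G_0(r\cdot)/r)=\mathcal I(r\rho,0,u^+,G_0).
\]
The right-hand side tends to the limit $L=c_0^2|\nabla u(0)|^2|\nabla G_0(0)|^2$. For the Poisson case we use the almost-monotonicity \eqref{eq:alm-mon}, which still gives existence of the limit. The key point is strong $W^{1,2}$ convergence with the weight $|x|^{2-n}$, which we need in order to pass to the limit inside $I$. We plan to obtain it in the standard way: test the inequality $\Delta u_r\ge -r$ with $u_r\eta$, using that $u_r\Delta u_r$ has the right sign on the positive set, so that $\limsup\int|\nabla u_r|^2\eta\le\int|\nabla u^*|^2\eta$. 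The weight near the origin is handled by uniform Lipschitz bounds, since $|x|^{2-n}$ is integrable. It follows that $\mathcal I(\rho,0,u^*,|\nabla G_0(0)|x_1^-)=L$ for every $\rho>0$. Since $I(\rho,0,x_1^-)$ is constant in $\rho$, $I(\rho,0,u^*)$ is also constant in $\rho$ and equals $c_0|\nabla u(0)|^2$.

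\textbf{Step 3 (rigidity).} This is the main obstacle. We need the equality case of the ACF formula for the pair $(u^*,x_1^-)$. Constancy of $\mathcal I$ forces both functions to be harmonic in their positivity sets and homogeneous of degree one. It also forces the spherical traces of the positivity sets to be complementary hemispheres, with first eigenvalues equal to $n-1$. Hence $u^*=c\,x_1^+$, and Step 2 gives $c^2=|\nabla u(0)|^2$, i.e.\ $c=|\nabla u(0)|=:c_1>0$. If the literature's equality statement is not directly available for subharmonic (not harmonic) $u^*$, we would instead argue directly. Constancy of $I(\rho,0,u^*)$, combined with subharmonicity and support in a half-space, should force $\Delta u^*=0$ in $\{u^*>0\}$ and degree-one homogeneity through the ACF derivative identity. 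A degree-one homogeneous positive harmonic function in a subdomain of the half-space $\{x_1>0\}$ whose spherical eigenvalue equals $n-1$ must fill the half-space, by strict monotonicity of eigenvalues under domain inclusion. Since the limit is the same for every subsequence, the full family converges, which is the statement. Identifying $|\nabla u(0)|=\partial_1u(0)$ in the integral-mean sense, and $\partial_ju(0)=0$ for $j\neq1$, then follows by computing $I$ for $x_1^+$.
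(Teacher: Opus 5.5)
Your route is the paper's: blow up at $y^0$, use the scale invariance $\mathcal I(\rho,0,u_r,G_r)=\mathcal I(r\rho,0,u^+,G)$, pass to the limit using strong convergence, and conclude from the equality case of the ACF formula. The paper quotes that equality case as Theorem 2.9 of Petrosyan--Shahgholian--Uraltseva, which already covers nonnegative subharmonic pairs, so your fallback eigenvalue argument is not needed. Your additions are correct and make the statement sharper than the paper's proof does. These are the flattening of $K_0/r$ to $\{x_1<0\}$, so that $x_1^-$ is an admissible partner; the identification $c_1=|\nabla u(0)|$; and uniqueness of the blow-up limit.

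\textbf{The gap is in the strong-convergence step.} As you wrote it, it fails. Testing $\Delta u_r\ge -r$ with $u_r\eta$ only gives
\[
\limsup_{r\to0}\int|\nabla u_r|^2\eta\le-\int u^*\nabla u^*\cdot\nabla\eta=\int|\nabla u^*|^2\eta+\int u^*\eta\,d(\Delta u^*).
\]
Since $u^*\ge0$ is only known to be subharmonic, the last term is $\ge 0$, so the sign works against you. To close the argument you would need $\Delta u^*=0$ on $\{u^*>0\}$. That is part of what the rigidity step is supposed to produce, so the argument is circular.

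The standard fix, presumably the ``compactness argument'' the paper invokes, is to test with $(u_r-u^*)\eta$ instead:
\[
\int|\nabla(u_r-u^*)|^2\eta=-\int(u_r-u^*)\nabla(u_r-u^*)\cdot\nabla\eta-\int(u_r-u^*)\eta\,d(\Delta u_r-\Delta u^*).
\]
Both terms are bounded by $C\|u_r-u^*\|_{L^\infty(\operatorname{supp}\eta)}\to0$. This is because $\Delta u_r+r\ge0$ and $\Delta u^*\ge0$ are measures whose mass on $\operatorname{supp}\eta$ is uniformly bounded; to see this, test against a cutoff and use $|u_r(x)|\le L|x|$. This gives strong $W^{1,2}_{loc}$ convergence, and your treatment of the weight $|x|^{2-n}$ near the origin then goes through.

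\textbf{The divergence-form case is not covered.} The lemma is also stated under the hypotheses of Corollary~\ref{cor:heat}, and your proposal treats only the Laplacian. There you need, as the paper does, to observe that the blow-up satisfies a frozen-coefficient inequality $\operatorname{div}(A_0\nabla u^*)\ge0$. You then reduce to the Laplacian by a linear change of variables, which maps the half-space and the linear profile to a half-space and a linear profile.
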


\begin{proof}
    
By the monotonicity formula we have, for $r,s > 0$
$$
\mathcal{I} (rs,y^0, u_y, G ) = \mathcal{I} (s,0, (u_r)_y, G_r ) ,
$$
where $G$ is a solution to the corresponding PDE, already defined earlier in the paper, and $u_r (x) = u(rx + y^0)/r$, and $G_r (x) = G(rx + y^0)/r$. 
Both $u, G$ being Lipschitz, are in $W_{loc}^{1,\infty} (B_{1/2}) $, and hence $u_r, G_r$ are in $W_{loc}^{1,\infty} (B_{1/2r}) $. In particular, for a subsequence $u_{r_j}, G_{r_j}$ converge weakly in $W_{loc}^{1,p} (\R^n) $ (for any $1< p < \infty $) to a limit functions $u_0, G_0$ in $\R^n$. By compactness argument the convergence is strong (also for any $1 < p < \infty $). Hence 
$$
\mathcal{I} (0_+, y^0, u, G)= 
\lim_{r_j \to 0}\mathcal{I} (r_js,y^0, u_{y^0}, G ) = \lim_{r_j \to 0} \mathcal{I} (s,0, (u_{y^0})_{r_j}, G_{r_j} ) =
\mathcal{I} (s,0, \max(0,u_0), G_0 ).
$$ 
Furthermore, the PDE for both  $G_0$ and $u_0$  is now  a constant coefficient PDE, $div (A_0 \nabla u_0) \geq 0$, and $div (A_0 \nabla G_0) = 0$.   Observe that under Lipschitz scaling, 
the right hand side, as well as all lower order derivatives vanish.
We may now consider a rotation and scaling of the coordinates $A_0^{1/2} x$, and define 
$$\tilde u_0 (x)= u_0(A_0^{1/2} x), \qquad
\tilde G_0 (x) = G_0(A_0^{1/2} x),
$$
which implies that $\Delta u_0 \geq 0$, and $\Delta G_0 =  0$ (in its support).  This in particular implies that 

$$\mathcal{I} (s,0, \max(0,\tilde u_0), \tilde G_0 ) = \hbox{Constant}.$$
By a strong form of monotonicity formula (for the Laplace operator) we must have $\{u_0 >0 \}$ is a half-space and $u_0$ is linear, unless $u_0 \equiv 0$, see \cite[Theorem 2.9]{PSU2012}.
This proves \eqref{eq:asymptotic2}.
\end{proof}

\section*{Acknowledgments}
This work was supported by the Science Committee of RA under the Research projects No 23RL-1A027 and No. 25RL-1A040. H. Shahgholian was supported in part by Swedish Research Council (grant no. 2021-03700).

\section*{Declarations}

\noindent {\bf  Data availability statement:} All data needed are contained in the manuscript.

\medskip
\noindent {\bf  Funding and/or Conflicts of interests/Competing interests:} The authors declare that there are no financial, competing or conflict of interests.

 \end{document}